\documentclass[11pt]{article}

\usepackage[a4paper,margin=1in]{geometry}

\usepackage[T1]{fontenc}
\usepackage[utf8]{inputenc}
\usepackage{lmodern}

\usepackage{amsmath,amssymb,amsfonts,amsthm}
\usepackage{mathtools}

\usepackage{graphicx}
\usepackage{booktabs}
\usepackage{amssymb}
\usepackage{array}
\usepackage{caption}
\usepackage{subcaption}
\usepackage{xcolor}
\usepackage{algorithm}
\usepackage{algorithmic}
\usepackage{comment}

\usepackage[numbers,sort&compress]{natbib}
\usepackage[colorlinks=true,
            linkcolor=blue,
            citecolor=blue,
            urlcolor=blue]{hyperref}

\newtheorem{theorem}{Theorem}
\newtheorem{lemma}{Lemma}
\newtheorem{proposition}{Proposition}
\newtheorem{corollary}{Corollary}
\newcommand{\R}{\mathbb{R}}
\newcommand{\transpose}{\top}
\theoremstyle{definition}
\newcommand{\Ahat}{\widehat{A}}

\newtheorem{assumption}{Assumption}
\newcommand{\norm}[1]{\left\lVert #1 \right\rVert}

\title{MultiLRSGA: A method for multi-player differentiable games} 

\author{
K. R. Foglia\thanks{Corresponding author. Email: \texttt{katherine.foglia@unical.it}}\\
Department of Mathematics and Computer Science, University of Calabria\\
Ponte P. Bucci, 30B, Arcavacata di Rende (CS), 87036, Italy
\and
V. Colao\\
Department of Mathematics and Computer Science, University of Calabria\\
Ponte P. Bucci, 30B, Arcavacata di Rende (CS), 87036, Italy\\
\texttt{vittorio.colao@unical.it}
\and
A. Borz{\`{\i}}\\
Institut f\"ur Mathematik, Universit\"at W\"urzburg\\
Emil-Fischer-Strasse 30, W\"urzburg, 97074, Germany\\
\texttt{alfio.borzi@mathematik.uni-wuerzburg.de}
}

\date{}

\begin{document}

\maketitle

\begin{abstract}
We propose MultiLRSGA, an $h$-player extension of LRSGA (see \cite{vater2025}) for the computation of stable Nash equilibria in differentiable games. The method originates from the decomposition of the game Jacobian into symmetric and antisymmetric components, which motivates symplectic corrections designed to attenuate the rotational part of the dynamics. In the two-player setting, LRSGA replaces mixed second-order blocks with low-rank secant approximations. The passage to the multi-player case, however, is not a mere blockwise reformulation: the antisymmetric correction is no longer determined by a single pair of cross-interactions, but by a block antisymmetric operator collecting all pairwise couplings among the players. On this basis, we formulate MultiLRSGA by constructing, for each player, a low-rank approximation of the Jacobian of the partial gradient and extracting from it the blocks required to define an approximate antisymmetric correction. Under standard local assumptions around a stable Nash equilibrium, we prove local linear convergence of the method. The key technical ingredient is a lemma controlling the distance between the exact antisymmetric correction and its secant approximation in the $h$-player setting, thereby extending to the multi-player framework the convergence mechanism previously available for LRSGA. The proposed formulation preserves the computational advantages of low-rank symplectic corrections and is naturally suited to numerical validation on differentiable games with explicit payoffs and more than two agents.

\end{abstract}

\noindent\textbf{Keywords:}
Differentiable games, Nash equilibria, low-rank updates, symplectic gradient adjustment, multi-player optimization.
\medskip

\noindent\textbf{MSC 2020:}
91A10; 91A06; 90C33; 90C53; 65K10.

\section{Introduction}

Differentiable games provide a natural framework for learning problems in which several objectives interact and no single scalar function governs the whole dynamics. In this setting, the direct analogy with classical optimization breaks down: even when each player follows its own gradient, the resulting iterations may exhibit persistent oscillations, limit cycles, or a marked sensitivity to the choice of the stepsize. This viewpoint was clarified in particular by Balduzzi et al.~\cite{balduzzi2018} and then by Letcher et al.~\cite{letcher2019}, who showed that the game Jacobian admits a decomposition into a symmetric part, related to potential-like dynamics, and an antisymmetric part, responsible for Hamiltonian or rotational effects.

On top of this decomposition, the recent literature has developed methods that incorporate higher-order information in order to control the rotational component of the game dynamics. Symplectic Gradient Adjustment (SGA) arises precisely from the idea of using the antisymmetric part of the game Jacobian to modify the update direction and favour the attainment of stable fixed points~\cite{balduzzi2018,letcher2019}. Competitive Gradient Descent (CGD), in turn, defines the update as the Nash equilibrium of a regularized local bilinear approximation of the game and yields local convergence guarantees in the competitive two-player setting~\cite{schaefer2019}. In both cases, the message is the same: cross-interactions between players contain information that is dynamically decisive and cannot be ignored if one aims to mitigate the rotational behaviour of gradient-based learning in games.

Our starting point is not SGA itself, but LRSGA, namely a low-rank two-player variant in which the mixed second-order blocks are replaced by rank-one secant approximations~\cite{vater2025}. This construction preserves the symplectic flavour of the correction while avoiding the explicit computation of the mixed derivatives, which can be costly in large-scale models. However, extending LRSGA from two players to the general $h$-player case is by no means automatic. In the two-player setting, the antisymmetric part is governed by a single pair of mixed blocks, so the correction has an essentially binary structure. In the $h$-player setting, by contrast, the antisymmetric correction becomes a block operator in which each block row gathers all interactions between player $i$ and the remaining $h-1$ players. Thus, the correction no longer acts on a single cross-interaction, but rather synthesizes a whole family of pairwise couplings into one coherent skew component.

This structural difference is the main motivation for the present work. The goal is twofold. On the one hand, we introduce a genuine multi-player formulation of LRSGA, called MultiLRSGA, that preserves the computational advantages of low-rank secant corrections. On the other hand, we prove that this formulation is theoretically sound. The key result is a lemma that controls the error between the exact antisymmetric correction and the secant-based correction in the $h$-player case; this lemma makes it possible to transfer the local convergence mechanism already available for the two-player LRSGA method to the multi-player setting. 
The two-player case is therefore recalled only as a minimal technical reference; the real focus of the paper lies in the genuinely multi-player nature of the correction and in the corresponding local theory.


The remainder of the paper is organized as follows.
Section~\ref{sec:setting} introduces the problem setting and the notation used throughout the paper.
In Section~\ref{sec:method}, we formulate MultiLRSGA for differentiable games with \(h\) players.
Section~\ref{sec:convergence} establishes the key approximation lemma for the antisymmetric correction and uses it to extend the local convergence result from the two-player LRSGA setting to the multi-player case.
Section~\ref{sec:experiment} presents a preliminary numerical experiment, providing a first validation of the proposed method.

Finally, Section~\ref{sec:conclusion} concludes the paper by summarizing the main findings and outlining future research directions.

\section{Problem setting and notation}\label{sec:setting}

Let \(S_i\subset \mathbb R^{d_i}\), \(i=1,\ldots,h\), be nonempty, convex, and compact sets, and let
\(
d:=\sum_{i=1}^h d_i.
\)
Let \(\Omega\subset \mathbb R^d\) be an open convex set such that
\(S_1\times\cdots\times S_h\subset \Omega\). An \(h\)-player differentiable game can be formulated as
\begin{equation}\label{eq:game-problem}
\min_{x_i\in S_i} f_i(x_1,\ldots,x_h),\qquad i=1,\ldots,h,
\end{equation}
where \(f_i\in C^3(\Omega,\mathbb R)\) is the objective function of player \(i\).
For brevity, we denote the joint strategy vector by
\(
w=(x_1,\ldots,x_h)\in\mathbb R^d.
\)
A point \((x_1^\star,\ldots,x_h^\star)\in S_1\times\cdots\times S_h\)
is called a Nash equilibrium (NE) of \eqref{eq:game-problem} if no player can
decrease its own objective by a unilateral deviation, that is,
\[
f_i(w^\star)\le f_i(x_i,w^\star_{-i}),
\qquad \forall x_i\in S_i,\qquad i=1,\ldots,h,
\]
where
\(
w^\star_{-i}:=(x_1^\star,\ldots,x_{i-1}^\star,x_{i+1}^\star,\ldots,x_h^\star).
\)
Equivalently,
\(
x_i^\star\in \operatorname*{arg\,min}_{x_i\in S_i}
f_i(x_i,w^\star_{-i}),
\quad i=1,\ldots,h.
\)
Such points will be denoted by \(w^\star\).
We introduce the game gradient

\begin{equation}\label{eq:game-gradient}
F(w) \coloneqq 
\big(
\nabla_{x_1} f_1(w)^\top,
\dots,
\nabla_{x_h} f_h(w)^\top
\big)^\top .
\end{equation}

and the game Hessian
\begin{equation}\label{eq:game-hessian}
H(w) \coloneqq DF(w) = \big(H_{ij}(w)\big)_{i,j=1}^h,
\qquad
H_{ij}(w) \coloneqq \nabla^2_{x_i x_j} f_i(w) \in \R^{d_i \times d_j}.
\end{equation}
As in the mechanics of differentiable games~\cite{balduzzi2018,letcher2019}, we decompose the game Hessian as
\begin{equation}\label{eq:SA-decomposition}
H(w) = S(w) + A(w),
\end{equation}
where
\[
S(w) \coloneqq \frac{1}{2}\big(H(w)+H(w)^\transpose\big),
\qquad
A(w) \coloneqq \frac{1}{2}\big(H(w)-H(w)^\transpose\big).
\]
At block level, this means
\begin{equation}\label{eq:block-A}
A_{ij}(w) = \frac{1}{2}\Big(H_{ij}(w)-H_{ji}(w)^\transpose\Big),
\qquad
A_{ii}(w)=0.
\end{equation}
This notation will make the role of the antisymmetric correction completely explicit.
The first-order stationarity and the second-order necessary conditions for an interior local Nash equilibrium \(w^\star\) are
\begin{equation}\label{equiHess1NE}
F(w^\star)=0, \qquad \nabla_{x_ix_i}^2 f_i(w^\star)\succeq 0, \quad \text{for }i \in\{1, \dots, h\}.
\end{equation}
That is, the game gradient vanishes at $w^\star$ and each player’s second order derivative with respect to its own variable is positive semidefinite.


\section{The MultiLRSGA Method}\label{sec:method}

We start the description of our method by briefly recalling the two-player case. When $h=2$, LRSGA can be viewed as a low-rank counterpart of SGA. Instead of computing the mixed derivatives
\[
\nabla^2_{x_1x_2}f_1(w^k),
\qquad
\nabla^2_{x_2x_1}f_2(w^k),
\]
one builds secant approximations of the corresponding Jacobians and extracts from them the cross-blocks needed in the symplectic correction~\cite{vater2025}. In this way, one keeps the structure of SGA while avoiding the explicit evaluation of the exact mixed derivatives. 
This two-player construction is the point of departure of the present work. Our purpose here is not to revisit the two-player analysis, but to show how the low-rank symplectic idea changes in nature as soon as one moves to the \(h\)-player setting.

\subsection{From a binary correction to a multi-player correction}\label{subsec:binary-multi}

The main structural change is already visible in~\eqref{eq:block-A}. For $h>2$, the antisymmetric part is no longer described by a single pair of off-diagonal blocks. Indeed, for each $i \neq j$,
\[
A_{ij}(w)
=
\frac{1}{2}\Big(\nabla^2_{x_i x_j}f_i(w)-\nabla^2_{x_j x_i}f_j(w)^\transpose\Big),
\]
so the antisymmetric correction depends on all pairwise interactions among the players. 
For each player \(i\), the update aggregates the contributions of the other \(h-1\) players instead of relying on a single mixed block. 
This is the main difference from the two-player case.

\subsection{Construction of MultiLRSGA}\label{subsec:multilrsga-construction}

For each player $i$, we introduce a matrix
\[
M_i^k \in \R^{d_i \times d}
\]
that approximates the Jacobian of the partial gradient $\nabla_{x_i}f_i$ with respect to the whole variable $w$, namely
\[
M_i^k \approx D\big(\nabla_{x_i}f_i\big)(w^k).
\]
We update it by a Broyden-type rank-one secant formula~\cite{broyden1965}:
\begin{equation}\label{eq:broyden-update}
M_i^{k+1}
=
M_i^k
+
\frac{
\Big(\nabla_{x_i}f_i(w^{k+1})-\nabla_{x_i}f_i(w^k)-M_i^k(w^{k+1}-w^k)\Big)(w^{k+1}-w^k)^\transpose
}{
\norm{w^{k+1}-w^k}^2
}.
\end{equation}
Writing $M_i^k$ by block columns as
\[
M_i^k = \big([M_i^k]_1,\dots,[M_i^k]_h\big),
\qquad
[M_i^k]_j \in \R^{d_i \times d_j},
\]
the block $[M_i^k]_j$ approximates $\nabla^2_{x_i x_j}f_i(w^k)$. This allows us to define an approximate antisymmetric correction $\Ahat_k$ without ever forming the exact mixed derivatives:
\begin{equation}\label{eq:approx-A-blocks}
(\Ahat_k)_{ii}=0,
\qquad
(\Ahat_k)_{ij}
=
\frac{1}{2}\Big([M_i^k]_j - [M_j^k]_i^\transpose\Big),
\qquad i \neq j.
\end{equation}
The MultiLRSGA iteration is then given by
\begin{equation}\label{eq:multilrsga-iteration}
w^{k+1}
=
w^k - \eta\big(I-\tau \Ahat_k\big)F(w^k),
\end{equation}
where $\eta>0$ is the main stepsize and $\tau>0$ weights the antisymmetric correction.

In component form, the update of player $i$ reads
\begin{equation}\label{eq:component-update}
x_i^{k+1}
=
x_i^k - \eta\,\nabla_{x_i}f_i(w^k)
+
\frac{\eta\tau}{2}
\sum_{\substack{j=1 \\ j\neq i}}^h
\Big([M_i^k]_j - [M_j^k]_i^\transpose\Big)
\nabla_{x_j}f_j(w^k).
\end{equation}
This formula makes the multi-player nature of the method explicit: for each player, the correction acts as a combination of the gradients of all the other players, mediated by low-rank approximations of the corresponding mixed couplings.

\subsection{Interpretation}\label{subsec:interpretation}

MultiLRSGA can be interpreted as a low-rank multi-player extension of the SGA philosophy. The exact symplectic correction would involve the matrix $A(w^k)$ itself, whereas MultiLRSGA replaces $A(w^k)$ with the secant-based approximation $\Ahat_k$ obtained from the matrices $M_i^k$. 

The gain is computational: one avoids the explicit computation of all mixed second-order derivatives $\nabla^2_{x_i x_j}f_i$. 
The price is theoretical: in the present setting, convergence is not a direct consequence of the two-player analysis through a mere notational change. 
One must quantify how the approximation errors in the matrices $M_i^k$ propagate into the block structure of the antisymmetric correction. 
This is precisely the role of the key lemma in the next section.


\section{Local Convergence Analysis}\label{sec:convergence}

\subsection{Local assumptions and the frozen map}\label{subsec:frozen-map}

Throughout this section, let $w^\star$ denote a stable Nash equilibrium. In particular, we assume
\[
F(w^\star)=0,
\]
that $H(w^\star)$ is invertible, and that its symmetric part $S(w^\star)$ is positive semidefinite. To analyse MultiLRSGA, we introduce the frozen map
\begin{equation}\label{eq:frozen-map}
T^\star_{\eta,\tau}(w)
\coloneqq
w-\eta\big(I-\tau A(w^\star)\big)F(w),
\end{equation}
that is, the map obtained by replacing the varying secant correction $\Ahat_k$ with the exact antisymmetric correction frozen at the equilibrium.

The frozen map will serve as the local reference dynamics. The actual MultiLRSGA iteration~\eqref{eq:multilrsga-iteration} will be compared with~\eqref{eq:frozen-map}; local convergence will follow once the difference between $\Ahat_k$ and $A(w^\star)$ is controlled uniformly in a neighbourhood of $w^\star$.

\begin{assumption}\label{ass:local-lipschitz}
There exists a convex neighbourhood $\mathcal{U}$ of $w^\star$ such that the game gradient $F$ is $L_F$-Lipschitz on $\mathcal{U}$ and, for each $i=1,\dots,h$, the mapping $D(\nabla_{x_i}f_i)$ is $L_i$-Lipschitz on $\mathcal{U}$.
\end{assumption}

\begin{assumption}\label{ass:frozen-contractive}
The frozen map $T^\star_{\eta,\tau}$ is continuously differentiable on $\mathcal U$ and it satisfies
\(
\norm{DT^\star_{\eta,\tau}(w^\star)}_2 < 1.
\)
\end{assumption}

\subsection{Playerwise control of the secant approximations}\label{subsec:playerwise-control}

The first step is to control, for each player, the error between the secant matrix $M_i^k$ and the exact Jacobian $D(\nabla_{x_i}f_i)(w^\star)$.

\begin{proposition}\label{prop:playerwise-secants}
Under Assumption~\ref{ass:local-lipschitz}, for each player $i=1,\dots,h$  and for every $k$ such that $w^k,w^{k+1}\in\mathcal U$, 
the Broyden update~\eqref{eq:broyden-update} satisfies
\begin{equation}\label{eq:playerwise-bound}
\begin{aligned}
\norm{M_i^{k+1}-D(\nabla_{x_i}f_i)(w^\star)}_2
&\leq
\norm{M_i^k-D(\nabla_{x_i}f_i)(w^\star)}_2 \\
&\quad + 2L_i
\max\Big\{\norm{w^{k+1}-w^\star}_2,\norm{w^k-w^\star}_2\Big\}.
\end{aligned}
\end{equation}
\end{proposition}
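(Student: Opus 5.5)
This is the standard bounded-deterioration argument for Broyden's method, applied player by player.

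Fix $i$ and abbreviate $s_k:=w^{k+1}-w^k$, $y_k:=\nabla_{x_i}f_i(w^{k+1})-\nabla_{x_i}f_i(w^k)$, $J_\star:=D(\nabla_{x_i}f_i)(w^\star)$ and $E_k:=M_i^k-J_\star$. If $s_k=0$ the update \eqref{eq:broyden-update} is undefined; we adopt the convention $M_i^{k+1}=M_i^k$, for which the claim is trivial. Assume therefore $s_k\neq 0$.

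\textbf{Error recursion.} Subtracting $J_\star$ from both sides of \eqref{eq:broyden-update} and inserting $\pm J_\star s_k$ in the numerator gives
\[
E_{k+1}=E_k\Big(I-\frac{s_ks_k^\transpose}{\norm{s_k}_2^2}\Big)+\frac{(y_k-J_\star s_k)\,s_k^\transpose}{\norm{s_k}_2^2}.
\]
The matrix $P:=I-s_ks_k^\transpose/\norm{s_k}_2^2$ is an orthogonal projector, so $\norm{P}_2\le 1$ and hence $\norm{E_kP}_2\le\norm{E_k}_2$. The rank-one term has spectral norm $\norm{y_k-J_\star s_k}_2/\norm{s_k}_2$. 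It therefore suffices to show
\[
\norm{y_k-J_\star s_k}_2\le 2L_i\,\max\{\norm{w^{k+1}-w^\star}_2,\norm{w^k-w^\star}_2\}\,\norm{s_k}_2 .
\]

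\textbf{Bounding the secant residual.} Since $\mathcal U$ is convex and $w^k,w^{k+1}\in\mathcal U$, the segment $w^k+ts_k$, $t\in[0,1]$, lies in $\mathcal U$. Because $f_i\in C^3$, the integral mean value theorem gives
\[
y_k-J_\star s_k=\int_0^1\big(D(\nabla_{x_i}f_i)(w^k+ts_k)-J_\star\big)s_k\,dt .
\]
By Assumption~\ref{ass:local-lipschitz}, the norm of the integrand is at most $L_i\norm{w^k+ts_k-w^\star}_2\,\norm{s_k}_2$. Writing $w^k+ts_k-w^\star=(1-t)(w^k-w^\star)+t(w^{k+1}-w^\star)$, convexity of the norm yields $\norm{w^k+ts_k-w^\star}_2\le\max\{\norm{w^k-w^\star}_2,\norm{w^{k+1}-w^\star}_2\}$. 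Integrating over $t$ gives the required bound with constant $L_i$.

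This is in fact sharper than the factor $2L_i$ in \eqref{eq:playerwise-bound}, so the stated inequality follows a fortiori. A cruder route, bounding the residual separately at the two endpoints, would produce the factor $2$ directly.

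\textbf{Main obstacle.} The only delicate point is that the Lipschitz bound must be applied at points of the segment, which requires $w^\star\in\mathcal U$ and the segment to stay inside $\mathcal U$. Both hold by convexity of $\mathcal U$. Everything else is the standard projection identity.
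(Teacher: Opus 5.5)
Your proof is correct. Its skeleton matches the paper's: the error recursion $E_{k+1}=E_kP+(y_k-J_\star s_k)s_k^\top/\|s_k\|_2^2$ with $\|P\|_2\le 1$. The difference lies in how the secant residual $y_k-J_\star s_k$ is bounded.

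\textbf{The paper's route.} It adds and subtracts $D(\nabla_{x_i}f_i)(w^k)\,s_ks_k^\top/s_k^\top s_k$, i.e.\ it linearizes at the endpoint $w^k$. With $J_k:=D(\nabla_{x_i}f_i)(w^k)$, it splits $y_k-J_\star s_k=(y_k-J_ks_k)+(J_k-J_\star)s_k$. The first piece is a Taylor remainder, of size $\tfrac{L_i}{2}\|s_k\|_2^2$. The second is a Jacobian drift, bounded by $L_i\|w^k-w^\star\|_2\|s_k\|_2$. The paper then invokes $\|s_k\|_2\le 2\max\{\|w^{k+1}-w^\star\|_2,\|w^k-w^\star\|_2\}$, deferring the details to the two-player Lemma~5.1. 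Each piece then contributes $L_i\max\{\cdot\}$. This triangle inequality on $s_k$ is where the factor $2$ comes from. The ``cruder route'' you allude to is essentially this argument.

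\textbf{Your route.} You compare the averaged Jacobian along the segment directly with $J_\star$, and use convexity of $t\mapsto\|w^k+ts_k-w^\star\|_2$. This never passes through $\|s_k\|_2$ and yields the constant $L_i$. In fact, integrating the convex combination instead of bounding it by the max gives $\tfrac{L_i}{2}(\|w^{k+1}-w^\star\|_2+\|w^k-w^\star\|_2)$, the classical Dennis--Mor\'e bounded-deterioration estimate.

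\textbf{Comparison.} Your argument is self-contained and sharper. The paper's argument reproduces the cited two-player lemma verbatim and keeps the constant $2L_i$. The extra factor is immaterial for the subsequent local convergence theorem, so either version serves.
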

\begin{proof}
Rewrite the update as
    \begin{align*}
    M_i^{k+1}
= M_i^k\!\left(I_d-\frac{s_ks_k^{\!\top}}{s_k^{\!\top}s_k }\right)
+\frac{\bigl(\nabla_{x_i}f_i(w^{k+1})-\nabla_{x_i}f_i(w^{k}) \bigr)s_k^{\!\top}}{s_k^{\!\top}s_k},
\qquad s_k:=w^{k+1}-w^k.
    \end{align*}
    
By adding and subtract \(D(\nabla_{x_i}f_i(w^\star))\), $D(\nabla_{x_i}f_i(w^{k})) \frac{s_ks_k^{\!\top}}{s_k^{\!\top}s_k }$ and $D(\nabla_{x_i}f_i(w^\star)) \frac{s_ks_k^{\!\top}}{s_k^{\!\top}s_k }$ and by using
\(\bigl\|I_d-\frac{s_ks_k^{\!\top}}{s_k^{\!\top}s_k }\bigr\|_2\le 1\),
the Lipschitz property of \(D(\nabla_{x_i}f_i)\),
and $\norm{w^{k+1} - w^k}_2 \leq \norm{w^{k+1} - w^\star}_2 + \norm{w^k - w^\star}_2 \leq 2 \max\{\norm{w^{k+1}_ - w^\star}_2, \norm{w^k - w^\star}\}_2$ we can follow the proof already done for the particular case $h=2$ (see~\cite{vater2025}, Lemma~5.1) proving \eqref{eq:playerwise-bound}.
\end{proof}
Proposition~\ref{prop:playerwise-secants} is the multi-player analogue of the control already used in the two-player LRSGA analysis: it says that the secant error for each player grows at most linearly with the distance of the current iterates from the equilibrium.

\subsection{The key lemma: from playerwise estimates to the skew correction}\label{subsec:key-lemma}

The real jump from the two-player to the multi-player setting occurs at the level of the antisymmetric correction.

\begin{lemma}\label{lem:skew-error}

Assume that, for some $\delta>0$,
\(
\norm{M_i^k-D(\nabla_{x_i}f_i)(w^\star)}_2 \leq \delta,
\quad i=1,\dots,h.
\)
Then the approximate antisymmetric correction $\Ahat_k$ defined in~\eqref{eq:approx-A-blocks} satisfies the following inequality
\begin{equation}\label{eq:skew-error-bound}
\norm{\Ahat_k-A(w^\star)}_2 \leq (h-1)\delta.
\end{equation}
\end{lemma}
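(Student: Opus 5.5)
The plan is to reduce the bound to a decomposition of the error matrix into $h-1$ pieces, each of norm at most $\delta$. Set $J_i:=D(\nabla_{x_i}f_i)(w^\star)\in\R^{d_i\times d}$ and $E_i:=M_i^k-J_i$, so that $\norm{E_i}_2\le\delta$ for every $i$ by hypothesis. By the block description of the game Hessian, the $j$-th block column of $J_i$ is exactly $H_{ij}(w^\star)=\nabla^2_{x_ix_j}f_i(w^\star)$. Comparing \eqref{eq:approx-A-blocks} with \eqref{eq:block-A} then gives, for $i\neq j$,
\[
(\Ahat_k-A(w^\star))_{ij}=\tfrac12\big([E_i]_j-[E_j]_i^\transpose\big),
\]
while the diagonal blocks of $\Ahat_k-A(w^\star)$ are zero.

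Let $P\in\R^{d\times d}$ be the block matrix with $P_{ij}=[E_i]_j$ for $i\neq j$ and $P_{ii}=0$. The block formula above is then the statement
\[
\Ahat_k-A(w^\star)=\tfrac12\,(P-P^\transpose),
\]
and the triangle inequality gives $\norm{\Ahat_k-A(w^\star)}_2\le\norm{P}_2$. So it suffices to prove $\norm{P}_2\le(h-1)\delta$. Each individual block is controlled directly: $[E_i]_j=E_i\,\iota_j$, where $\iota_j$ is the isometric embedding of $\R^{d_j}$ into $\R^d$. Hence $\norm{[E_i]_j}_2\le\norm{E_i}_2\le\delta$.

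The main obstacle is assembling these block bounds without losing a dimension-dependent factor. A crude estimate, writing $P$ as the row-stack of the $E_i$ minus its block diagonal, only gives $(\sqrt h+1)\delta$. That is not $\le(h-1)\delta$ for $h=2,3$. I would instead split the off-diagonal blocks along cyclic shifts. For $m=1,\dots,h-1$, let $P^{(m)}$ keep only the blocks in positions $(i,\sigma_m(i))$, where $\sigma_m(i)=i+m \pmod h$, and set all other blocks to zero. Since $m\neq0$, $\sigma_m$ has no fixed points, so every off-diagonal position $(i,j)$ lies in exactly one $P^{(m)}$, namely $m=j-i \pmod h$. Therefore
\[
P=\sum_{m=1}^{h-1}P^{(m)}.
\]

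Each $P^{(m)}$ has at most one nonzero block in every block row and every block column, because $\sigma_m$ is a permutation. For $x=(x_1,\dots,x_h)$ this gives
\[
\norm{P^{(m)}x}_2^2=\sum_i\norm{[E_i]_{\sigma_m(i)}x_{\sigma_m(i)}}_2^2\le\delta^2\sum_i\norm{x_{\sigma_m(i)}}_2^2=\delta^2\norm{x}_2^2,
\]
so $\norm{P^{(m)}}_2\le\delta$. Summing over the $h-1$ shifts yields $\norm{P}_2\le(h-1)\delta$, and hence \eqref{eq:skew-error-bound}.
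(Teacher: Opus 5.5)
Your proof is correct, but it combines the block estimates differently from the paper.

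The paper works directly with $C=\Ahat_k-A(w^\star)$. It bounds each antisymmetrized block by $\|C_{ij}\|_2\le\tfrac12(\delta+\delta)=\delta$, applies Cauchy--Schwarz in each block row to get $\|(Cv)_i\|_2^2\le(h-1)\delta^2\sum_{j\ne i}\|v_j\|_2^2$, and then sums over $i$ using the double count $\sum_i\sum_{j\ne i}\|v_j\|_2^2=(h-1)\|v\|_2^2$.

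You instead first remove the antisymmetrization via $C=\tfrac12(P-P^\top)$, so only the unsymmetrized error $P$ has to be bounded. You then cover the off-diagonal block positions by the $h-1$ fixed-point-free cyclic shifts. Each shift gives a block-permutation operator of norm at most $\delta$, and the triangle inequality finishes.

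The paper's route is a compact Schur-type estimate. Yours makes the origin of the factor $h-1$ transparent: it is the number of permutations needed to tile the off-diagonal positions, and the argument uses nothing beyond the triangle inequality.

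Both arguments use only the blockwise bounds $\|[E_i]_j\|_2\le\delta$. They discard the fact that each whole block row satisfies $\|E_i\|_2\le\delta$. The ``crude'' estimate you set aside uses exactly that row information and gives $\|P\|_2\le(\sqrt{h}+1)\delta$. This equals $(h-1)\delta$ at $h=4$ and is strictly better for every $h\ge5$. Combining the two, you obtain $\norm{\Ahat_k-A(w^\star)}_2\le\min\{h-1,\sqrt{h}+1\}\,\delta$, so the factor $(h-1)$ in the lemma is not sharp when there are many players.
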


\begin{proof}

Let \(C:=\widehat A_k-A(w^\star)\), which is the block-antisymmetric matrix  
    \[
C \;=\;
\begin{pmatrix}
\mathbf 0_{d_1\times d_1} & C_{12}            & C_{13}            & \cdots & C_{1h}\\
\\
-\,C_{12}^{\!\top}        & \mathbf 0_{d_2\times d_2} & C_{23}            & \cdots & C_{2h}\\
\\
-\,C_{13}^{\!\top}        & -\,C_{23}^{\!\top}        & \mathbf 0_{d_3\times d_3} & \cdots & C_{3h}\\
\vdots                    & \vdots                    & \vdots                    & \ddots & \vdots\\
-\,C_{1h}^{\!\top}        & -\,C_{2h}^{\!\top}        & -\,C_{3h}^{\!\top}        & \cdots & \mathbf 0_{d_h\times d_h}
\end{pmatrix}.
\]
where,
\[
C_{ij}
=\tfrac12\!\left([M_i^k]_j - \nabla_{x_i x_j}^2 f_i(w^\star)\right)
-\tfrac12\!\left([M_j^k]_i - \nabla_{x_j x_i}^2 f_j(w^\star)\right)^{\!\top}
\] for \(i\ne j\).
Since each \([M_i^k]_j\) is a block column of \(M_i^k\), \(\|[M_i^k]_j-\nabla_{x_i x_j}^2 f_i(w^\star)\|_2\le \|M_i^k-D(\nabla_{x_i}f_i(w^\star))\|_2\le \delta\).
Analogously, \(\|([M_j^k]_i - \nabla_{x_j x_i}^2 f_j(w^\star))^{\!\top}\|_2
=\|[M_j^k]_i - \nabla_{x_j x_i}^2 f_j(w^\star)\|_2 \le \delta,\);
hence
\(\|C_{ij}\|_2\le \tfrac12(\delta+\delta)=\delta\).
Now take a block vector \(v=(v_1,\dots,v_h) \in \mathbb{R}^d\) with \(v_j\in\R^{d_j}\) and \(\|v\|_2^2=\sum_j\|v_j\|_2^2\). Then
\begin{equation*}
\resizebox{\textwidth}{!}{$
\|(Cv)_i\|_2
\;\le\;\sum_{j \neq i}\|C_{ij}\|_2\,\|v_j\|_2
\;\le\;\left(\sum_{j \neq i}\|C_{ij}\|_2^{2}\right)^{1/2}\left(\sum_{j \ne i}\|v_j\|_2^{2}\right)^{1/2}
\le\; \left((h-1)\,\delta^2\,\right)^{1/2}\sqrt{\left(\sum_{j \ne i}\|v_j\|_2^{2}\right)},    
$}
\end{equation*}

i.e. \(\|(Cv)_i\|_2^2 \le (h-1)\,\delta^2\,\sum_{j \ne i}\|v_j\|_2^2.\)
Summing over \(i\) we obtain \(\|Cv\|_2^2 = \sum_i\|(Cv)_i\|_2^2 \le (h-1)\delta^2\,\sum_i\sum_{j \neq i}\|v_j\|_2^2 =  (h-1)\delta^2 (h-1)\sum_{j}\|v_j\|_2^{2} = (h-1)^2\delta^2 \|v\|_2^{2}
\), hence \(\|C\|_2\le (h-1)\delta\).
\end{proof}

Lemma~\ref{lem:skew-error} is the key technical ingredient of the extension. In the two-player case, the antisymmetric correction depends on a single pair of mixed blocks, while in the $h$-player case each block row of $\Ahat_k-A(w^\star)$ contains $h-1$ contributions, one for each interaction with the remaining players. Thus the factor $(h-1)$ represents the quantitative trace of the multi-player nature of the correction.
Notice that for $h=2$ Lemma~\ref{lem:skew-error} coincides with the original Lemma~5.2 of~\cite{vater2025}.


\subsection{Local linear convergence}\label{subsec:local-linear-convergence}

We can now state the local convergence result for MultiLRSGA.

\begin{theorem}[Local linear convergence]\label{thm:local-convergence}
Assume that $w^\star$ is a stable Nash equilibrium, that Assumptions~\ref{ass:local-lipschitz} and~\ref{ass:frozen-contractive} hold, and that
\begin{equation}\label{eq:step-condition}
\eta\tau(h-1)L_F < 1.
\end{equation}
Then there exist constants $R>0$ and $\delta_0>0$ such that $B_R(w^\star)\subseteq\mathcal U$ and, if
\[
w^0 \in B_R(w^\star),
\qquad
\norm{M_i^0-D(\nabla_{x_i}f_i)(w^\star)}_2 \leq \delta_0,
\quad i=1,\dots,h,
\]
the sequence generated by MultiLRSGA converges linearly to $w^\star$. More precisely, there exist constants $C>0$ and $q\in(0,1)$ such that
\begin{equation}\label{eq:linear-rate}
\norm{w^k-w^\star}_2 \leq Cq^k,
\qquad k \geq 0.
\end{equation}
\end{theorem}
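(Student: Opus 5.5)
The plan is a perturbation argument. We compare the MultiLRSGA step with the frozen map \eqref{eq:frozen-map}, and we control the secant errors by combining Proposition~\ref{prop:playerwise-secants} with Lemma~\ref{lem:skew-error}.

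\textbf{Frozen-map contraction.} By Assumption~\ref{ass:frozen-contractive} and continuity of $DT^\star_{\eta,\tau}$, we choose $\rho<1$ and $R_1>0$ with $B_{R_1}(w^\star)\subseteq\mathcal U$ and $\norm{DT^\star_{\eta,\tau}(w)}_2\le\rho$ on $B_{R_1}(w^\star)$. Since $T^\star_{\eta,\tau}(w^\star)=w^\star$ (because $F(w^\star)=0$), the mean value inequality on the convex ball gives
\[
\norm{T^\star_{\eta,\tau}(w)-w^\star}_2\le\rho\norm{w-w^\star}_2 .
\]

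\textbf{One-step estimate.} We write
\[
w^{k+1}-w^\star=T^\star_{\eta,\tau}(w^k)-w^\star+\eta\tau\big(\Ahat_k-A(w^\star)\big)F(w^k).
\]
Because $F(w^\star)=0$ and $F$ is $L_F$-Lipschitz, $\norm{F(w^k)}_2\le L_F\norm{w^k-w^\star}_2$. If $\norm{M_i^k-D(\nabla_{x_i}f_i)(w^\star)}_2\le\delta_k$ for all $i$, Lemma~\ref{lem:skew-error} yields
\[
\norm{w^{k+1}-w^\star}_2\le\big(\rho+\eta\tau(h-1)L_F\delta_k\big)\norm{w^k-w^\star}_2 .
\]
We fix $q\in(\rho,1)$ and a target $\bar\delta>0$ with $\rho+\eta\tau(h-1)L_F\bar\delta\le q$. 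Condition~\eqref{eq:step-condition} ensures such a $\bar\delta$ of order one is admissible, for instance any $\bar\delta\le(1-\rho)/2$ times a harmless factor; in fact any small $\bar\delta$ works.

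\textbf{Induction.} We prove simultaneously that $w^k\in B_R(w^\star)$, $\norm{w^k-w^\star}_2\le q^k\norm{w^0-w^\star}_2$, and $\delta_k\le\bar\delta$. For the secant errors, Proposition~\ref{prop:playerwise-secants} and the contraction of the previous iterates give
\[
\delta_{k+1}\le\delta_k+2L\max\{\norm{w^{k+1}-w^\star}_2,\norm{w^k-w^\star}_2\}\le\delta_k+2Lq^kR ,
\]
where $L=\max_i L_i$. Summing the geometric series gives $\delta_k\le\delta_0+2LR/(1-q)$. We therefore choose $\delta_0=\bar\delta/2$ and $R\le R_1$ with $2LR/(1-q)\le\bar\delta/2$, which closes the induction. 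The iterates stay in $B_R(w^\star)\subseteq\mathcal U$, so Proposition~\ref{prop:playerwise-secants} applies at every step. This yields \eqref{eq:linear-rate} with $C=R$.

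\textbf{Main obstacle and degenerate steps.} The delicate point is the coupling between the growth of $\delta_k$ and the contraction of $w^k$. This is handled by the geometric-series (bounded-deterioration) argument above, and it is why the iterate contraction and the secant bound must be carried in the same induction. A minor technicality is the case $w^{k+1}=w^k$, where the Broyden update \eqref{eq:broyden-update} is undefined. If $w^{k+1}=w^k$ then $(I-\tau\Ahat_k)F(w^k)=0$. When $\norm{\tau\Ahat_k}_2<1$, or more simply because $\Ahat_k$ is skew so that $I-\tau\Ahat_k$ is always invertible, this forces $F(w^k)=0$. Since $H(w^\star)$ is invertible, $w^\star$ is the unique zero of $F$ near $w^\star$, so $w^k=w^\star$ and the sequence is stationary at the solution.
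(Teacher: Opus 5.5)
Your proposal is correct and follows the same route as the paper. The paper's proof only says that the two-player LRSGA argument carries over with the factor $(h-1)$ from Lemma~\ref{lem:skew-error}: one perturbs the frozen map $T^\star_{\eta,\tau}$ by $\eta\tau(\Ahat_k-A(w^\star))F(w^k)$ and controls the secants by bounded deterioration. You have written that argument out explicitly, including the joint induction on $\norm{w^k-w^\star}_2$ and the secant errors.

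Two small remarks. As you observe, condition~\eqref{eq:step-condition} is not essential in this argument; it only lets $\bar\delta$ be chosen independently of $\eta,\tau,h,L_F$. Your degenerate case $w^{k+1}=w^k$ can also be settled directly from the one-step estimate $\norm{w^{k+1}-w^\star}_2\le q\norm{w^k-w^\star}_2$, which forces $w^k=w^\star$.
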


\begin{proof}
TThe main difference with the particular case \(h=2\) is the factor \((h-1)\) in Lemma~\ref{lem:skew-error}. By assuming $\eta\,\tau\,(h-1)L_F<1$ (instead of $\eta\,\tau\,L_F<1$),
the proof follows as in the two-player case. Equivalently, this hypothesis can be enforced by rescaling one of the step-size parameters as \(\tau \leftarrow \tau/(h-1)\) \quad or \quad \(\eta \leftarrow \eta/(h-1)\).
\end{proof}

The actual iteration~\eqref{eq:multilrsga-iteration} can be written as a perturbation of the frozen dynamics~\eqref{eq:frozen-map}, with perturbation term $\eta\tau\big(\Ahat_k-A(w^\star)\big)F(w^k)$.
By Lemma~\ref{lem:skew-error}, this term is controlled by the secant errors with the factor $(h-1)$, while Assumption~\ref{ass:frozen-contractive} yields the local contraction of the frozen map. 
Condition~\eqref{eq:step-condition}, together with a sufficiently small choice of the initial neighbourhood and of the initial secant errors, ensures that the perturbation does not destroy this contraction. 
This gives the linear estimate~\eqref{eq:linear-rate}.


\begin{corollary}[Local linear convergence under smoothness assumptions]
\label{cor:smooth-local-convergence}
Let $\Omega\subseteq\R^d$ be an open convex domain, let
$f_i\in C^3(\Omega,\R)$ for all $i=1,\dots,h$, and suppose that
$w^\star\in\Omega$ is a stable Nash equilibrium. 
Then there exist $R>0$ and $\delta_0>0$, with $B_R(w^\star)\subseteq\Omega$,
such that, choosing $w^0\in B_R(w^\star)$ and $M_i^0$ satisfying
\[
\norm{M_i^0-D(\nabla_{x_i}f_i)(w^\star)}_2<\delta_0,
\quad i=1,\dots,h,
\]
the MultiLRSGA iterates generated by~\eqref{eq:multilrsga-iteration}
converge linearly to $w^\star$ for appropriately chosen parameters
$\eta,\tau>0$ satisfying the hypotheses of
Theorem~\ref{thm:local-convergence}.
\end{corollary}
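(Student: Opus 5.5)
The plan is to derive Corollary~\ref{cor:smooth-local-convergence} from Theorem~\ref{thm:local-convergence}. It suffices to show that, under $f_i\in C^3(\Omega,\R)$ and stability of $w^\star$, one can find a convex neighbourhood $\mathcal U\subseteq\Omega$ and parameters $\eta,\tau>0$ for which Assumptions~\ref{ass:local-lipschitz} and~\ref{ass:frozen-contractive} and condition~\eqref{eq:step-condition} all hold.

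\textbf{Step 1 (Assumption~\ref{ass:local-lipschitz}).} Take $\mathcal U=\overline{B}_{r}(w^\star)$ with $r>0$ small enough that the closed ball lies in $\Omega$; for Assumption~\ref{ass:local-lipschitz} one may equally use the open ball, which is convex. Since $f_i\in C^3$, the map $F$ is $C^2$ and each $D(\nabla_{x_i}f_i)$ is $C^1$. On the compact convex ball their derivatives are bounded, so the mean value inequality gives Lipschitz constants
\[
L_F=\max_{\overline B_r}\norm{DF}_2,\qquad L_i=\max_{\overline B_r}\norm{D^2(\nabla_{x_i}f_i)}.
\]

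\textbf{Step 2 (Assumption~\ref{ass:frozen-contractive}).} The map $T^\star_{\eta,\tau}$ is $C^2$, hence $C^1$ on $\mathcal U$. Its derivative at the equilibrium is
\[
DT^\star_{\eta,\tau}(w^\star)=I-\eta(I-\tau A)H,
\]
where $A=A(w^\star)$ and $H=H(w^\star)=S+A$. Set $B:=(I-\tau A)H$. Then
\[
\norm{I-\eta B}_2^2=1-\eta\,\lambda_{\min}(B+B^\top)+O(\eta^2),
\]
so it suffices to have $B+B^\top\succ0$ for some $\tau>0$ and then take $\eta$ small. Using $A^\top=-A$, we have $B=S+A-\tau AS-\tau A^2$, and therefore
\[
B+B^\top=2S+2\tau A^\top A+\tau(SA-AS).
\]
This is the main obstacle: with $S$ only positive semidefinite, positivity must come from combining $S$ with $\tau A^\top A$, while the commutator term $\tau(SA-AS)$ is indefinite and of the same order in $\tau$. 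My first attempt would split $\mathbb R^d=\ker S\oplus(\ker S)^\perp$.
\begin{itemize}
\item On $(\ker S)^\perp$ the term $2S$ dominates for small $\tau$.
\item On $\ker S$ we have $v^\top(SA-AS)v=2v^\top SAv=0$, and $v^\top A^\top Av=\norm{Hv}^2>0$ by invertibility of $H$.
\end{itemize}
A Schur-complement/compactness argument on the unit sphere should then give positive definiteness for $\tau$ small. If this fails in general, I would fall back on the definition of stable Nash equilibrium, i.e.\ $S(w^\star)\succ0$ as in \cite{letcher2019}. In that case $B+B^\top\succ0$ trivially holds for $\tau$ small, by continuity at $\tau=0$.

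\textbf{Step 3 (condition~\eqref{eq:step-condition} and conclusion).} With $\tau$ fixed from Step 2, shrink $\eta$ further so that both $\norm{I-\eta B}_2<1$ and $\eta\tau(h-1)L_F<1$; both conditions hold for all sufficiently small $\eta$. All hypotheses of Theorem~\ref{thm:local-convergence} are now verified. It yields $R$ and $\delta_0$ with $B_R(w^\star)\subseteq\mathcal U\subseteq\Omega$, and hence linear convergence of the iterates. The strict inequality $<\delta_0$ in the corollary is weaker than the non-strict inequality $\le\delta_0$ in the theorem, so it causes no issue.
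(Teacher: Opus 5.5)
Your proposal is correct and follows the paper's route: verify the hypotheses of Theorem~\ref{thm:local-convergence} and apply it. These are Lipschitz bounds from $f_i\in C^3$, contractivity of the frozen map using only $H(w^\star)=S+A$ with $S\succeq0$, $A^\top=-A$ and $H(w^\star)$ invertible, and $\eta$ small enough that $\eta\tau(h-1)L_F<1$. The paper cites the SGA monotonicity argument of Lemma~4.3 in \cite{vater2025} for contractivity, whereas you prove it, and your $\ker S$ argument closes: $v^\top(B+B^\top)v=2v^\top Sv+\tau\bigl(2\norm{Av}_2^2+2v^\top SAv\bigr)$ is affine in $\tau$ with positive slope near the unit vectors of $\ker S$, so compactness of the unit sphere gives $B+B^\top\succ0$ for small $\tau$, and the fallback to $S\succ0$ (which would not match the paper's definition of a stable equilibrium) is unnecessary.
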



The proof follows the same lines as the proof of the corresponding local convergence
corollary in the two-player LRSGA analysis. The contractivity of the frozen map is
obtained from the same SGA monotonicity argument used in Lemma~4.3 of
\cite{vater2025}. Although that result is stated for two-player games, its proof only relies
on the matrix decomposition
\(
H(w^\star)=S(w^\star)+A(w^\star),
\quad
A(w^\star)^\top=-A(w^\star),
\)
together with the positive semidefiniteness of \(S(w^\star)\) and the nonsingularity of
\(H(w^\star)\). Hence the same argument applies to the full \(d\times d\) game Hessian
of the present \(h\)-player game. Therefore, \(\eta,\tau>0\) can be chosen so that
Assumption~2 holds. Taking \(\eta\) smaller if necessary, one can also impose
\(
\eta\tau(h-1)L_F<1.
\)
The conclusion then follows from Theorem~\ref{thm:local-convergence}, with
Lemma~\ref{lem:skew-error} replacing the two-player skew-correction estimate and producing
the factor \((h-1)\).

\section{Numerical Experiments}
\label{sec:experiment}
We tested \emph{MultiLRSGA} with a \emph{random} initialization of the mixed second–derivative blocks. We considered three differentiable objectives:
\[
\begin{aligned}
f_1(x_1,x_2,y,z)&=\tfrac12(x_1^2+x_2^2)+\,x_1\,\tanh(y)+(0.9)\,x_2\,\tanh(z),\\
f_2(x_1,x_2,y,z)&=\tfrac12\,y^2-\,y\,\tanh(x_1)+(0.8)\,y\,\tanh(z),\\
f_3(x_1,x_2,y,z)&=\tfrac12\,z^2-(0.9)\,z\,\tanh(x_2)-(0.8)\,z\,\tanh(y).
\end{aligned}
\]
A Nash equilibrium is $w^\star =(0,0,0,0)$ which satisfies the first-order conditions:
\[
\left\{
\begin{aligned}
\nabla_{x_1} f_1 &= x_1+\tanh(y)=0,\\
\nabla_{x_2} f_1 &= x_2+(0.9)\,\tanh(z)=0,\\
\nabla_{y}   f_2 &= y-\tanh(x_1)+(0.8)\,\tanh(z)=0,\\
\nabla_{z}   f_3 &= z-(0.9)\,\tanh(x_2)-(0.8)\,\tanh(y)=0.
\end{aligned}
\right.
\]

The initial point $w_0$ was fixed at $x_0=(1.0,-0.8)$, $y_0=0.9$, $z_0=-0.7$.


We compared our \textbf{MultiLRSGA} setting $\eta=0.001$ and $\tau=1.0$ with the \textbf{Gradient descent with the game gradient}, i.e.,
\[
w_{k+1}=w_k-\eta\,F(w_k),\]
using the same learning rate $\eta=0.001$. At each iteration we plotted the norm of the game gradient $\|F(w_k)\|$, the 3D trajectory in $(\|x_k\|,y_k,z_k)$, and the component norms $\|x_k\|$, $|y_k|$, $|z_k|$.

\begin{figure}[H]
  \centering
  \captionsetup[subfigure]{justification=centering}
  \begin{subfigure}[t]{0.48\linewidth}
    \centering
    \includegraphics[width=\linewidth]{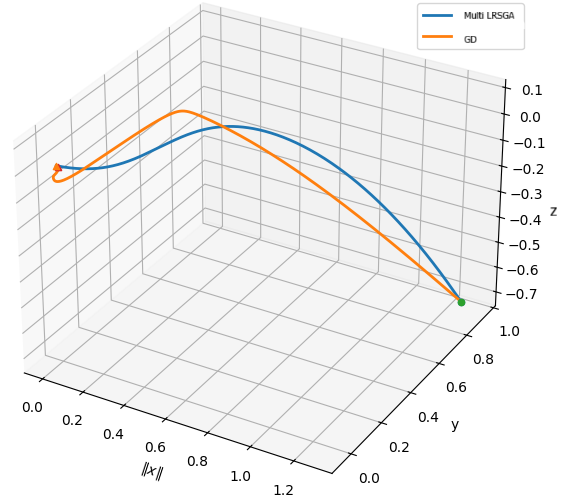}
    \caption{3D trajectory in $(\|x_k\|,y_k,z_k)$: MultiLRSGA vs Gradient descent.}
    \label{fig:traj3d}
  \end{subfigure}\hfill
  \begin{subfigure}[t]{0.48\linewidth}
    \centering
    \includegraphics[width=\linewidth]{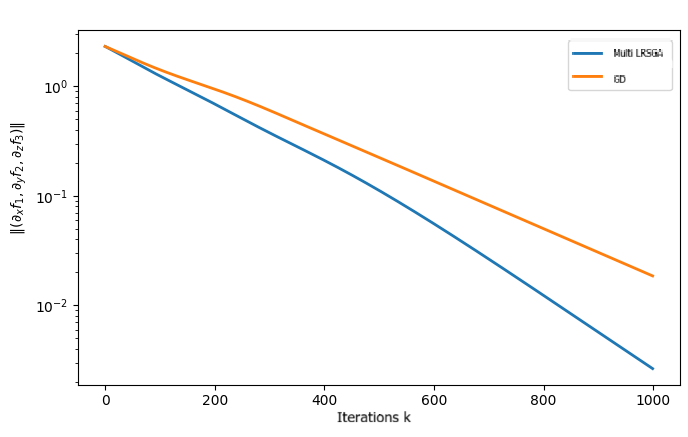}
    \caption{Residual $\|F(w_k)\|$ (log–scale): MultiLRSGA vs Gradient descent ($w_{k+1}=w_k-\eta F(w_k)$).}
    \label{fig:residual}
  \end{subfigure}\\[0.6em]
  \begin{subfigure}[t]{0.48\linewidth}
    \centering
    \includegraphics[width=\linewidth]{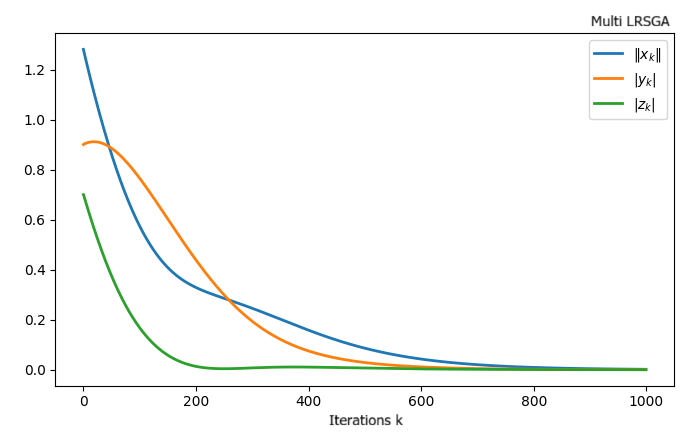}
    \caption{Component norms with \textbf{MultiLRSGA}: $\|x_k\|$, $|y_k|$, $|z_k|$.}
    \label{fig:norms_tau1}
  \end{subfigure}\hfill
  \begin{subfigure}[t]{0.48\linewidth}
    \centering
    \includegraphics[width=\linewidth]{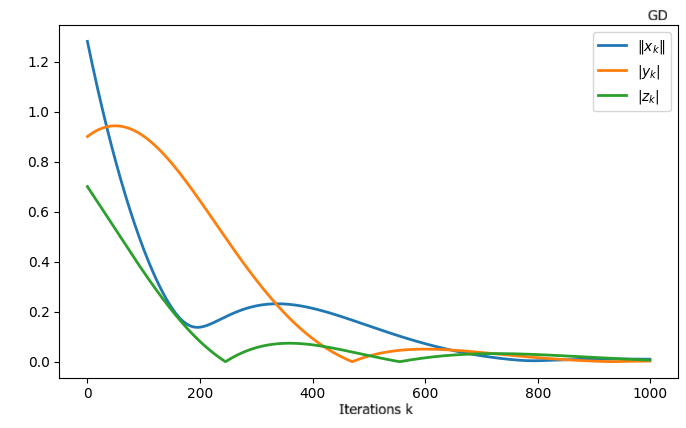}
    \caption{Component norms with \textbf{Gradient descent}: $\|x_k\|$, $|y_k|$, $|z_k|$.}
    \label{fig:norms_tau0}
  \end{subfigure}
  \caption{MultiLRSGA vs Gradient Descent: trajectories, residuals, and component norms.}
  \label{fig:multilrsga_fourplots}
\end{figure}

As shown in Figure \ref{fig:multilrsga_fourplots}, both methods converge to $w^\star$; MultiLRSGA shows a more regular and faster residual decay and visibly reduces oscillations compared with gradient descent.

\section{Conclusion}
\label{sec:conclusion}

We introduced MultiLRSGA, a multi-player extension of LRSGA for differentiable games with $h \geq 2$ players. 
The method constructs playerwise secant approximations of the Jacobians $D(\nabla_{x_i}f_i)$ and uses their block structure to define an approximate antisymmetric correction without explicitly computing the mixed second-order derivatives $\nabla^2_{x_i x_j}f_i$. 
Under local assumptions around a stable Nash equilibrium, we extended the proof strategy of LRSGA to the multi-player setting.

The key step in the extension is the control of the error $\|\Ahat_k-A(w^\star)\|_2$, which quantifies how the playerwise secant errors propagate into the full multi-player skew correction.

The preliminary numerical experiment presented above provides a first validation of the proposed method and illustrates its ability to reduce oscillations with respect to the standard game-gradient dynamics. Future work will focus on a broader numerical validation, including games with more players and more challenging payoff structures.

A further important direction is scalability: although MultiLRSGA avoids the explicit computation of mixed second-order derivatives, the matrices $M_i^k\in\R^{d_i\times d}$ have the same ambient dimensions as the Jacobians they approximate. 
This may become prohibitive in neural-network games or in competitive optimization settings, where the strategies correspond to weights and biases and the number of parameters is very large. 
For this reason, future work will focus on developing a limited-memory variant of MultiLRSGA, in line with the LM-LRSGA method presented in~\cite{lmlrsga}.

\bibliographystyle{plainnat}
\bibliography{references}

\end{document}